\documentclass[final]{amsart}
\usepackage{verbatim}

\usepackage{mathtools}
\mathtoolsset{showonlyrefs}

\newtheorem{theorem}{Theorem}[section]
\newtheorem{lemma}[theorem]{Lemma}
\newtheorem{proposition}[theorem]{Proposition}
\newtheorem{definition}[theorem]{Definition}
\newtheorem{corollary}[theorem]{Corollary}

\def \E {\mathbb E}
\def \P {\mathbb P}
\def \R {\mathbb R}
\def \B {\mathbb B_n}
\def \BP {{\mathcal{BP}}_k^n}
\newcommand {\vol}[2][n]{\left|#2 \right|_{#1}}

\def \dovr {d_{\operatorname{ovr}}}

\title[The lower dimensional slicing inequality]{The lower dimensional slicing inequality for functions and related distance inequalities}

\author[J. Haddad]{Juli\'an Haddad}
\address{Departamento de An\'alisis Matem\'atico\\ Universidad de Sevilla\\ Sevilla, Espa\~na}
\email{{\tt jhaddad@us.es}}
\keywords{Radon transform, slicing inequality, $k$-intersection bodies, outer volume ratio distance}
\subjclass[2010]{52A20, 44A12}
\date{\today}

\begin{document}
\begin{abstract}
	It was shown in \cite{koldobsky2015slicing} that for every origin-symmetric star body $K \subseteq \R^n$ of volume $1$, every even continuous probability density $f$ on $K$ and $1 \leq k \leq n-1$, there exists a subspace $F \subseteq \R^n$ of codimension $k$ such that
	\[
		\int_{K \cap F} f \geq c^k (d_{\rm ovr}(K, \mathcal{BP}_k^n))^{-k}
	\]
	where $d_{\rm ovr}(K, \mathcal{BP}_k^n)$ is the outer volume ratio distance from $K$ to the class of generalized $k$-intersection bodies, and $c>0$ is a universal constant.
	The upper bound $d_{\rm ovr}(K, \mathcal{BP}_k^n) \leq c' \sqrt{n/k} \left(\log\left(\frac{en}k\right)\right)^{3/2}$ was established in \cite{koldobsky2011isomorphic} for every origin-symmetric convex body $K$. 
	In this note we show that there exist an origin-symmetric convex body $K$ of volume $1$ and an even continuous probability density $f$ supported on $K$ such that for every subspace $F$ of codimension $k$,
	\[
		\int_{K \cap F} f \leq \left( c \sqrt{\frac n{k \log(n)}  } \right)^{-k}. 
	\]
	As a consequence we obtain a lower bound for $d_{\rm ovr}(K, \mathcal{BP}_k^n)$ with $K$ a convex body, complementing the upper bound in \cite{koldobsky2011isomorphic}. This is
\[c \sqrt{n/k} (\log(n))^{-1/2} \leq \sup_K \dovr(K, \BP) \leq c' \sqrt{n/k} \left(\log\left(\frac{en}k\right)\right)^{3/2}.\]
	The case $k=1$ was obtained previously in \cite{klartag2018example,Klartag2020}.
\end{abstract}
\maketitle
\section{Introduction}
\label{sec_intro}

The problem known as the {\it slicing inequality for general measures} asks which is the greatest possible value $s_n$ such that for every origin-symmetric convex body $K$ (convex compact with non-empty interior) and every even continuous probability density $f$ on $K$, there exists a hyperplane $F \subseteq \R^n$ with
\[\int_{K \cap F} f \geq s_n.\]
Koldobsky investigated this problem in \cite{koldobsky2012hyperplane, koldobsky2014estimate, koldobsky2015slicing} and showed that $s_n \geq \frac 1{2\sqrt n}$, although the constant can be improved if $K$ is restricted to certain classes of convex bodies.

A {\it star body} is a set $K \subseteq \R^n$ of the form
\[K = \{0\} \cup \{x \in \R^n \setminus \{0\}: |x| \leq \rho_K(x/|x|)\} \]
where the {\it radial function} $\rho_K :S^{n-1} \to (0,\infty)$ is continuous and strictly positive, and $|\cdot|$ is the Euclidean norm.
Every convex body containing the origin in the interior is also a star body.
The radial distance between two star bodies $K,L$ is $\max_{v \in S^{n-1}} |\rho_K(v) - \rho_L(v)|$.

Given a star body $K \subseteq \R^n$, the {\it intersection body} of $K$ is the star body ${\rm I} K$ defined by $\rho_{{\rm I} K}(v) = \vol[n-1]{K \cap v^\perp}$, where $|\cdot|_k$ denotes the $k$-dimensional volume.
Taking the closure in the radial metric of the sets ${\rm I} K$ with $K \subseteq \R^n$ a star body, we get the class of {\it intersection bodies}, which we denote by $\mathcal I_n$.

For any class of star bodies $\mathcal C$ the outer volume ratio distance from a star body $K$ to $\mathcal C$ is defined by
\[\dovr(K, \mathcal C) = \inf \left\{ \left( \frac{\vol{D}}{\vol{K}} \right)^{1/n} : K \subseteq D, D \in \mathcal C \right\}. \]

In \cite{koldobsky2015slicing} the following inequality was proven:
\begin{theorem}
	\label{res_slicing_1}
	Let $K$ be an origin-symmetric star body in $\R^n$. Then for any even continuous non-negative function $f$ we have
	\[\int_{K} f \leq 2 d_{\rm ovr}(K, \mathcal I_n) \vol{K}^{1/n} \max_{v \in S^{n-1}} \int_{K \cap v^\perp} f.\]

	In particular, if $K$ is an origin-symmetric star body in $\R^n$ of volume $1$ and $f$ is an even continuous probability density in $K$ then
	\begin{equation}
		\max_{v \in S^{n-1}} \int_{K \cap v^\perp} f \geq (2 d_{\rm ovr}(K, \mathcal I_n))^{-1} .
	\end{equation}
\end{theorem}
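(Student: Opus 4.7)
The plan is to reduce to the inequality
\[\int_K f \leq c_n \vol{D}^{1/n} \max_{v \in S^{n-1}} \int_{K \cap v^\perp} f\]
for any intersection body $D \supseteq K$, with an explicit dimensional constant $c_n \leq 2$, and then take the infimum over such $D$ so that $\vol{D}^{1/n}$ collapses to $\dovr(K, \mathcal I_n) \vol{K}^{1/n}$. The engine is the self-adjointness of the spherical Radon transform $R$, applied to the Radon representation of $\rho_D$.

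First I would fix $\varepsilon > 0$ and choose $D \in \mathcal I_n$ with $K \subseteq D$ and $\vol{D}^{1/n} \leq (\dovr(K,\mathcal I_n)+\varepsilon)\vol{K}^{1/n}$; since $\mathcal I_n$ is defined as a radial closure, by approximation I may assume $D = {\rm I} L$ for some star body $L$, so that $\rho_D = \frac{1}{n-1} R(\rho_L^{n-1})$. Writing the integral in polar coordinates and using the pointwise bound $r\leq \rho_K(\theta)\leq \rho_D(\theta)$ on \emph{one} factor of $r$,
\[\int_K f = \int_{S^{n-1}}\!\int_0^{\rho_K(\theta)} f(r\theta) r^{n-1}\,dr\,d\sigma(\theta) \leq \int_{S^{n-1}} \rho_D(\theta)\, g(\theta)\,d\sigma(\theta),\]
where $g(\theta):=\int_0^{\rho_K(\theta)} f(r\theta) r^{n-2}\,dr$. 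Substituting the Radon representation of $\rho_D$ and moving $R$ onto $g$ rewrites the right side as $\frac{1}{n-1}\int_{S^{n-1}} \rho_L^{n-1}(\phi)\, Rg(\phi)\,d\sigma(\phi)$; the crucial identification is that polar coordinates on $\phi^\perp$ give exactly $Rg(\phi)=\int_{K\cap\phi^\perp} f$.

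Pulling out the maximum gives $\int_K f \leq \max_\phi\int_{K\cap\phi^\perp}f \cdot \frac{1}{n-1}\int_{S^{n-1}} \rho_L^{n-1}\,d\sigma$. To handle the leftover I would use the Fubini identity $\int_{S^{n-1}} Rh\,d\sigma=|S^{n-2}|\int_{S^{n-1}} h\,d\sigma$, which yields $\frac{1}{n-1}\int\rho_L^{n-1}d\sigma=\frac{1}{|S^{n-2}|}\int\rho_D\,d\sigma$, followed by H\"older to bound $\int\rho_D\,d\sigma\leq|S^{n-1}|^{(n-1)/n}(n\vol{D})^{1/n}$. A short Stirling computation shows the resulting dimensional constant $|S^{n-1}|^{(n-1)/n}n^{1/n}/|S^{n-2}|$ stays below $2$ for every $n\geq 2$ (in fact it tends to $1/\sqrt e$), and sending $\varepsilon\to 0$ completes the argument.

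The main obstacle, and the step that must be arranged carefully, is the polar splitting in the second paragraph: one must peel off \emph{exactly one} factor of $r$ to absorb into $\rho_D$, leaving $r^{n-2}$ inside $g$, so that after the Radon duality the surviving inner integral agrees with the $(n-1)$-dimensional polar density on $\phi^\perp$. Once that match is in place, the rest is dimensional bookkeeping, plus the approximation step passing from the dense subclass $\{{\rm I} L\}$ to the radial closure $\mathcal I_n$.
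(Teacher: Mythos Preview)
The paper does not prove this theorem; it is quoted verbatim from \cite{koldobsky2015slicing} as background, so there is no ``paper's own proof'' to compare against. Your plan is correct and is essentially Koldobsky's original argument: peel off one radial factor to insert $\rho_D$, use the Radon representation of $\rho_D$ together with the self-adjointness of $R$ to turn the inner integral into $\int_{K\cap\phi^\perp} f$, and finish with a volume estimate. Two small technical remarks: (i) when you approximate a general $D\in\mathcal I_n$ by some ${\rm I}L$, you must also arrange $K\subseteq {\rm I}L$, which is handled by a tiny dilation after radial approximation; alternatively one can work directly with the measure representation $\rho_D=R\mu$ valid for all intersection bodies and avoid the approximation; (ii) your constant $c_n=|S^{n-1}|^{(n-1)/n}n^{1/n}/|S^{n-2}|$ is indeed maximized at $n=2$ with value $\sqrt\pi<2$, so the claimed bound holds.
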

Here $\dovr(K, \mathcal I_n)$ is the outer volume ratio distance between $K$ and the class of intersection bodies.
Since origin-symmetric ellipsoids are intersection bodies, if $K$ is an origin-symmetric convex body, John's theorem implies that $\dovr(K, \mathcal I_n) \leq \sqrt{n}$, hence the bound $s_n \geq \frac 1{2 \sqrt{n}}$.

The upper bound is $s_n \leq c/\sqrt{n}$, in other words, there exists an origin-symmetric convex body $K$ of volume $1$ and an even continuous probability density $f$ on $K$, such that
\[\max_{v\in S^{n-1}} \int_{K \cap v^\perp} f  \leq c \frac 1{\sqrt n}.\]
This estimate was obtained in \cite{klartag2018example} up to a factor $\sqrt{\log(\log(n))}$, and this factor was later removed in \cite{Klartag2020}.
Moreover, the estimates in \cite{klartag2018example, Klartag2020} also apply to non-central sections of $K$, this is,
\[\max_{v\in S^{n-1}, t \in \R} \int_{K \cap (v^\perp + tv)} f  \leq c \frac 1{\sqrt n}.\]
It follows from Theorem \ref{res_slicing_1} that the largest possible distance from a convex body to $\mathcal I_n$ has order $c \sqrt{n}$, this is,
\[c \sqrt n \leq \sup_K \dovr(K, \mathcal I_n) \leq \sqrt n \]
where the supremum runs over all origin-symmetric convex bodies $K \subseteq \R^n$, and $c>0$ is a universal constant.
A similar argument was used to show in \cite{bobkov2018estimates} (lower bound) and \cite{koldobsky2019measure} (upper bound) that
\[ c \sqrt{n/p} \leq \sup_K \dovr(K, L_p^n) \leq c' \sqrt{n/p} \]
for $p \geq 1$, where $L_p$ is the class of convex bodies $L$ such that $(\R^n, \|\cdot\|_L)$ embeds in $L_p$ as a Banach space (see \cite{koldobsky2005fourier} for details).
Also, in \cite{haddad2023radon} the convex body and measure constructed in \cite{klartag2018example} were adapted to a different slicing inequality from \cite{gregory2021inequalities} to obtain the lower bound
\begin{equation}
	\label{eq_estimate_Lp_lower}
	\sup_K \dovr(K, L_{-p}^n) \geq c n^{\frac 1{2p}},
\end{equation}
for $p \in [1,n)$, where the supremum runs over all origin-symmetric convex bodies $K \subseteq \R^n$, $c>0$ is a universal constant and $L_{-p}^n$ is the class of star bodies $L$ for which $(\R^n, \|\cdot\|_L)$ embeds in $L_{-p}$.
For integer $k \in [1,n)$, $L_{-k}$ is the class of $k$-intersection bodies (see \cite{koldobsky2005fourier} for details).

A version of Theorem \ref{res_slicing_1} with hyperplanes replaced by lower dimensional subspaces (Theorem \ref{res_slicing_k} below) was established also in \cite{koldobsky2015slicing}.
Denote by $G_{n,n-k}$ the Grassmanian of subspaces of $\R^n$ of codimension $k$.
The class of convex bodies responsible for the $k$-codimensional problem is the class of {\it generalized $k$-intersection bodies}, denoted by $\mathcal{BP}_k^n$, which can be defined as follows:
Given two star bodies $K,L$ and $p>0$, their radial $p$-sum is the star body $K \tilde+_p L$ whose radial function is 
\[\rho_{K \tilde+_p L}^p = \rho_K^p + \rho_L^p.\]
For $k \in [1,n)$, the class $\mathcal{BP}_k^n$ is defined as the closure in the radial metric of finite radial $k$-sums of origin-symmetric ellipsoids.
The class $\mathcal{BP}_k^n$ was introduced by Zhang in \cite{zhang1996sections} in connection with the lower dimensional Busemann-Petty problem. The lower-dimensional slicing inequality is as follows:

\begin{theorem}[{\cite[Corollary 1]{koldobsky2015slicing}}]
	\label{res_slicing_k}
	Let $K$ be an origin-symmetric star body in $\R^n$ and $1 \leq k \leq n-1$. Then for any even continuous non-negative function $f$,
	\begin{equation}
		\label{eq_slicing_k}
		\int_{K} f  \leq c^k d_{\rm ovr}(K, \mathcal{BP}_k^n)^k \vol{K}^{k/n} \max_{F \in G_{n,n-k}} \int_{K \cap F} f,
	\end{equation}
	where $c>0$ is a universal constant.
	In particular, if $K$ is an origin-symmetric star body in $\R^n$ of volume $1$ and $f$ is an even continuous probability density in $K$ then
	\begin{equation}
		\max_{F \in G_{n,n-k}} \int_{K \cap F} f \geq c^{-k} (d_{\rm ovr}(K, \mathcal{BP}_k^n))^{-k} .
	\end{equation}
\end{theorem}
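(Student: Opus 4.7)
The plan is to reduce the inequality to the case $K \in \BP$ via the outer volume ratio, and then exploit the integral representation of generalized $k$-intersection bodies over the Grassmannian $G_{n,n-k}$. For every $\epsilon > 0$, I would choose $D \in \BP$ with $K \subseteq D$ and $\vol{D}^{1/n} \leq (\dovr(K, \BP) + \epsilon) \vol{K}^{1/n}$. Extending $f$ by zero outside $K$ preserves non-negativity and continuity, and yields $\int_K f = \int_D f$ together with $\int_{K \cap F} f = \int_{D \cap F} f$ for every $F \in G_{n,n-k}$. Hence it suffices to prove the inequality with $K$ replaced by $D \in \BP$, and then pass to the limit $\epsilon \to 0$.

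For such $D$, the Fourier-analytic characterization of $\BP$ yields a non-negative Borel measure $\mu$ on $G_{n,n-k}$ such that $\int_{S^{n-1}} h(\theta) \rho_D^k(\theta)\, d\theta = \int_{G_{n,n-k}} \int_{S^{n-1} \cap F} h(\theta)\, d\theta_F\, d\mu(F)$ for every even continuous $h$ on $S^{n-1}$. Writing $\int_D f$ in polar coordinates and using the bound $r^k \leq \rho_D(\theta)^k$ on $0 \leq r \leq \rho_D(\theta)$,
\begin{equation}
\int_D f \leq \int_{S^{n-1}} \rho_D^k(\theta)\, g(\theta)\, d\theta, \qquad g(\theta) := \int_0^{\rho_D(\theta)} f(r\theta)\, r^{n-k-1}\, dr.
\end{equation}
Applying the representation with $h = g$ and identifying the inner spherical integral as $\int_{D \cap F} f$ (via polar coordinates inside the $(n-k)$-dimensional subspace $F$) turns the right-hand side into $\int_{G_{n,n-k}} \int_{D \cap F} f\, d\mu(F) \leq \mu(G_{n,n-k}) \cdot \max_F \int_{D \cap F} f$. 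Applying the representation a second time with $h = \rho_D^{n-k}$ gives the companion identity $n \vol{D} = (n-k) \int_{G_{n,n-k}} \vol[n-k]{D \cap F}\, d\mu(F)$; combined with the slicing estimate $\max_F \vol[n-k]{D \cap F} \geq c^k \vol{D}^{(n-k)/n}$ for bodies in $\BP$, this produces $\mu(G_{n,n-k}) \leq C^k \vol{D}^{k/n}$ and closes the argument.

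The main obstacle is the representation step: producing the non-negative measure $\mu$ representing $\rho_D^k$ requires the Fourier-analytic description of $\BP$ (positive definiteness of $\|x\|_D^{-k}$ as a distribution on $\R^n$) and a careful passage to the limit under the radial closure in the definition of $\BP$. The complementary lower bound on $\max_F \vol[n-k]{D\cap F}$ for $D \in \BP$ likewise leverages this Fourier-analytic machinery and must be tracked with universal constants of the form $c^k$; once both tools are in hand, the remainder of the proof is essentially polar-coordinate bookkeeping.
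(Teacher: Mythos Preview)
The paper does not prove this theorem; it quotes it from \cite{koldobsky2015slicing}. Your reduction to $D\in\BP$ via the outer volume ratio and the polar-coordinate estimate
\[
\int_D f\ \leq\ \int_{G_{n,n-k}}\Bigl(\int_{D\cap F}f\Bigr)\,d\mu(F)\ \leq\ \mu(G_{n,n-k})\cdot\max_{F}\int_{D\cap F}f
\]
via the Grassmannian representation of $\rho_D^k$ are both correct and are indeed the backbone of Koldobsky's original argument.

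The gap is in your bound on $\mu(G_{n,n-k})$. From $n\vol{D}=(n-k)\int_{G_{n,n-k}}\vol[n-k]{D\cap F}\,d\mu(F)$ together with a lower bound on $\max_F\vol[n-k]{D\cap F}$ you can only deduce
\[
\mu(G_{n,n-k})\ \geq\ \frac{n\,\vol{D}}{(n-k)\,\max_F\vol[n-k]{D\cap F}},
\]
which is the wrong direction; an upper bound on $\mu(G_{n,n-k})$ via this identity would require a \emph{uniform} lower bound on $\vol[n-k]{D\cap F}$ over the support of $\mu$, and no such bound is available in general. The correct move is to test the representation against $h\equiv 1$ rather than $h=\rho_D^{\,n-k}$: this gives $\mu(G_{n,n-k})=|S^{n-k-1}|^{-1}\int_{S^{n-1}}\rho_D^k$, and then H\"older's inequality $\int_{S^{n-1}}\rho_D^k\leq |S^{n-1}|^{1-k/n}\bigl(n\vol{D}\bigr)^{k/n}$ combined with standard gamma-function estimates (of the type in Lemma~\ref{res_estimate_gamma}) yields $\mu(G_{n,n-k})\leq C^k\vol{D}^{k/n}$, which closes the argument. (A minor side point: extending $f$ by zero to $D$ need not preserve continuity, so the function $g$ you feed into the representation may fail to lie in $C(S^{n-1})$; a routine approximation disposes of this.)
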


An upper bound for the distance to $\BP$ was obtained in \cite{koldobsky2011isomorphic} (the power of the logarithmic term was later corrected in \cite[Proposition 1]{koldobsky2015slicing}).
\begin{theorem}
	\label{res_estimate_BP_upper}
	Let $K$ be an origin-symmetric convex body in $\R^n$, and let $1 \leq k \leq n-1$. Then
	\begin{equation}
		\label{eq_estimate_BP_upper}
	\dovr(K, \BP) \leq c \sqrt{n/k} \left(\log\left(\frac{en}k\right)\right)^{3/2}
	\end{equation}
	where $c>0$ is a universal constant.
\end{theorem}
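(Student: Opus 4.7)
The task is to construct, for every origin-symmetric convex body $K \subseteq \R^n$, a body $D \in \BP$ containing $K$ with $(\vol{D}/\vol{K})^{1/n} \leq c\sqrt{n/k}\,(\log(en/k))^{3/2}$. Since $\dovr(\cdot,\BP)$ is invariant under volume-preserving linear transformations, I first place $K$ in an advantageous position (isotropic, or Milman's $M$-position), in which Banach-space invariants of $(\R^n, \|\cdot\|_K)$ such as the isotropic constant $L_K$, the Levy mean $M(K)$, and the mean width $M^*(K)$ are controlled by standard estimates.

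The key analytic input is the Fourier-analytic characterization of $\BP$: a star body $D$ lies in $\BP$ if and only if the distribution $\|\cdot\|_D^{-k}$ is a positive tempered distribution on $\R^n$. The problem thus reduces to finding a star body $D \supseteq K$ whose negative $k$-th power norm is Fourier-positive and whose volume is small, equivalently to constructing an even function $g \geq \rho_K^k$ on $S^{n-1}$ which is represented by a sum of $\rho_{\mathcal{E}}^k$ for ellipsoids $\mathcal{E}$ and whose $L^{n/k}(S^{n-1})$-norm is controlled.

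A natural candidate is an appropriate radial $k$-sum $\rho_D^k = \sum_{i=1}^N \rho_{\mathcal{E}_i}^k$ of ellipsoids $\mathcal{E}_i$ adapted to the geometry of $K$; any such $D$ lies in $\BP$ by definition. The $\mathcal{E}_i$ should be chosen so that each direction $\theta \in S^{n-1}$ is well-covered by some $\mathcal{E}_i$ in the $M$-ellipsoid metric, so that $\rho_D \geq \rho_K$ holds pointwise while $\vol{D}$ remains small. The inclusion $K \subseteq D$ then follows from a Dvoretzky--Milman-type estimate applied to sections of $K$ by random $k$-codimensional subspaces: in good position, such sections are comparable to the Euclidean ball of radius of order $\sqrt{n/k}\,(\log(en/k))^{3/2}\cdot\vol{K}^{1/n}$. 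The $\sqrt{n/k}$ factor reflects the relative dimension of the section, while the polylogarithmic factor comes from the Banach-space-theoretic control on $M$ and $M^*$.

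The main obstacle is the exact exponent $3/2$ of the logarithm. A straightforward combination of $M$-position estimates with Pisier's bound on the K-convexity constant ($\leq c\log(n+1)$) yields only $(\log)^2$ or worse; obtaining $(\log)^{3/2}$ requires invoking the sharp duality of entropy numbers (Artstein--Milman--Szarek--Tomczak-Jaegermann) and carefully tracking constants across the identification $\mathrm{ellipsoid\ cover}\,\leftrightarrow\,\BP$. This is exactly the delicate step that was revisited in \cite[Proposition 1]{koldobsky2015slicing} when the logarithmic exponent from the original argument of \cite{koldobsky2011isomorphic} was corrected.
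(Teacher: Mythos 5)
You have a genuine gap: what you have written is a plan for a proof, not a proof, and the decisive step is exactly the one you leave out. This theorem is not proved in the present paper at all --- it is quoted from \cite{koldobsky2011isomorphic}, with the logarithmic exponent corrected in \cite[Proposition 1]{koldobsky2015slicing} --- so your argument has to stand on its own, and it does not: the ellipsoids $\mathcal E_i$ are never actually constructed, the containment $K \subseteq D$ is asserted via an unspecified ``Dvoretzky--Milman-type estimate'' for random $k$-codimensional sections (which is not a standard off-the-shelf statement in the form you use it), the volume $\vol{D}$ is never estimated, and you concede in the last paragraph that obtaining the exponent $3/2$ is the delicate point and defer it to the very papers whose result you are supposed to prove. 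The bound $c\sqrt{n/k}\left(\log\left(\frac{en}{k}\right)\right)^{3/2}$ \emph{is} the content of the theorem; a sketch that admits it only knows how to reach $(\log)^2$ ``or worse'' has not proved it.

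There is also a concrete mathematical error in your ``key analytic input.'' The Fourier-positivity of $\|\cdot\|_D^{-k}$ characterizes $k$-intersection bodies, i.e.\ the class $L_{-k}^n$, not $\BP$. Only the inclusion $\BP \subseteq L_{-k}^n$ is known (this is the result of \cite{koldobsky2000functional} and \cite{milman2006generalized} cited in the paper); the reverse implication is not available, so reducing the problem to finding a Fourier-positive $\|\cdot\|_D^{-k}$ would at best bound $\dovr(K, L_{-k}^n)$, which is a \emph{smaller} quantity than $\dovr(K,\BP)$ and does not give the theorem. If instead you work directly with finite radial $k$-sums of ellipsoids (which are in $\BP$ by definition, making the Fourier step superfluous), then the entire burden falls on the covering construction you only gesture at: you must exhibit the ellipsoids, prove $\rho_D \geq \rho_K$ pointwise, and control $\vol{D}^{1/n}/\vol{K}^{1/n}$ with the stated polylogarithmic loss. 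None of that is carried out, so the proposal cannot be accepted as a proof of \eqref{eq_estimate_BP_upper}; for this statement the correct course in this paper is simply to cite \cite{koldobsky2011isomorphic} and \cite[Proposition 1]{koldobsky2015slicing}.
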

Moreover, since $\BP \subseteq L_{-k}^n$ (this was proven in \cite{koldobsky2000functional} and \cite{milman2006generalized}, see also \cite[Theorem 4.23]{koldobsky2005fourier}), inequalities \eqref{eq_estimate_Lp_lower} and \eqref{eq_estimate_BP_upper} imply
\begin{equation}
	\label{eq_estimate_BP_lowerupper}
	c n^{\frac 1{2k}} \leq \sup_K \dovr(K, L_{-k}^n) \leq \sup_K \dovr(K, \BP) \leq c \sqrt{n/k} \left(\log\left(\frac{en}k\right)\right)^{3/2}.
\end{equation}
The two estimates are tight for large values of $k$ (at least proportional to $n$). For low values of $k>1$ the lower and upper bounds are far apart as $n \to \infty$. In this paper we use the same technique as in \cite{klartag2018example} to prove that Theorem \ref{res_slicing_k} is sharp up to a logarithmic term, thus obtaining the correct order (up to logarithms) of $\sup_K \dovr(K, \BP)$.
Our main theorem is the following:
\begin{theorem}
	\label{res_existsK}
	There exists a universal constant $c>0$ such that for every $n \geq 2$ and $1 \leq k \leq n-1$, there exists an origin-symmetric convex body $K \subseteq \R^n$ and an even continuous probability density $f$ in $K$ such that 
	for every subspace $F\subseteq \R^n$ of codimension $k$,
	\[\int_F f \leq \left( c \sqrt{\frac{ \log(n) k}n}\right)^k.\]
\end{theorem}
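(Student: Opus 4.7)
I would generalize the construction of Klartag \cite{klartag2018example, Klartag2020} from hyperplane sections to codimension-$k$ sections. The core reformulation is that for a codimension-$k$ subspace $F = E^\perp$, the integral $\int_F f$ equals the density at the origin of the $k$-dimensional marginal $P_E X$ of $X \sim f$, where $P_E$ denotes orthogonal projection onto $E$. So the theorem amounts to producing an origin-symmetric convex $K$ and an even continuous density $f$ on $K$ such that every $k$-dimensional marginal of $X \sim f$ has density at $0$ bounded by $(c\sqrt{k\log n/n})^k$.

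A clean radial construction proves the theorem as written. Pick $R \asymp n/\sqrt{k\log n}$, take $K = R \B$, and set
\[
f(x) = \frac{\rho(|x|)}{|S^{n-1}|\, |x|^{n-1}},
\]
where $\rho$ is an even smooth probability density on $\R$ supported in $[R/2, R] \cup [-R, -R/2]$. This $f$ is even, continuous (the singularity of $|x|^{-(n-1)}$ at $0$ is killed by $\rho$ vanishing near $0$), and normalized to $1$. Writing the integral in spherical coordinates on $F$ yields
\[
\int_F f = \frac{|S^{n-k-1}|}{|S^{n-1}|}\int_0^\infty \rho(r)\, r^{-k}\, dr \lesssim \left(\frac{n}{R^2}\right)^{k/2} \lesssim \left(\frac{k\log n}{n}\right)^{k/2},
\]
using Stirling (giving $|S^{n-k-1}|/|S^{n-1}| \lesssim (n/(2\pi))^{k/2}$) and the support bound $\int \rho(r)\, r^{-k}\, dr \leq (R/2)^{-k}$. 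Since $f$ is radial, the bound is automatic over all $F$.

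\textbf{Main obstacle.} The Euclidean ball $K$ above has $\vol{K} \gg 1$ and is itself a $k$-intersection body, so it does not directly witness the $\dovr(K, \BP) \gtrsim \sqrt{n/(k\log n)}$ lower bound highlighted in the introduction, which would require $\vol{K} = 1$. For the volume-$1$ version, following Klartag, one would take $K$ to be an affine image of the cube (or a random polytope) equipped with an even product density $f(x) = \prod g(x_i)$, bound the marginal density at the origin for fixed $E$ via the Fourier inversion
\[
h_E(0) \leq (2\pi)^{-k} \int_E \prod_{i=1}^n |\widehat g(\langle t, e_i\rangle)|\, dt,
\]
and then uniformize over $E \in G_{n,k}$ using a covering argument on the Grassmannian, whose metric entropy $\sim k(n-k)\log(1/\varepsilon)$ produces the extra $(\log n)^{k/2}$ factor. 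Establishing this uniformity — in particular handling exceptional coordinate-aligned $E$ on which $\widehat g$ may not decay sufficiently — is the main technical challenge.
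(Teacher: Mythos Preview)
Your radial construction is arithmetically correct but proves only the vacuous reading of the statement. Without a volume normalization the conclusion collapses under scaling: replacing $K$ by $\lambda K$ and $f$ by $\lambda^{-n}f(\cdot/\lambda)$ keeps $\int_K f=1$ while multiplying every $\int_{K\cap F}f$ by $\lambda^{-k}$, so any bound is achievable by dilation alone. The theorem as printed omits the normalization, but the paper's proof (and the abstract, and the application to Corollary~\ref{res_estimate_BP_lower}) in fact produces $\vol{K}=1$; that is the content. Your ball $K=R\B$ has $\vol{K}^{1/n}$ of order $R/\sqrt n=\sqrt{n/(k\log n)}$, and rescaling it to unit volume cancels exactly the gain you computed, leaving $\int_F f$ of constant order. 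So the radial example cannot yield the corollary, as you yourself observe.

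For the volume-$1$ statement the paper does not follow your product-density/Fourier route. It takes $f_0$ to be the even mixture of $2N$ standard Gaussians centered at $\pm n\theta_1,\dots,\pm n\theta_N$, with $N$ of order $n^{k/2+4}$ and the $\theta_i$ i.i.d.\ uniform on $S^{n-1}$. For fixed $F$ one has $\int_F f_0=(2\pi)^{-k/2}\tfrac1{2N}\sum_i e^{-d(F,\pm n\theta_i)^2/2}$, and a bispherical-coordinates computation gives $\E\, e^{-d(F,n\theta)^2}\le n^{-k/2}$; uniformity over all $F$ then follows from a union bound over a Szarek $\delta$-net in $G_{n,n-k}$ together with the elementary concentration lemma for averages of i.i.d.\ $[0,1]$-variables. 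The body $K_0$ is the convex hull of the $2N$ centers together with the points $\pm ne_i$; the decisive step, which has no analogue in your sketch, is Gluskin's bound on the volume of a polytope with $m$ vertices inside $n\B$, giving $\vol{K_0}^{1/n}\le c\sqrt{\log(N/n)}\le c'\sqrt{k\log n}$. After checking $\int_{3K_0}f_0\ge 3/4$ via Chebyshev, one rescales to volume $1$. So the $\log n$ does enter through the Grassmannian net, as you anticipated, but it shows up via the number of centers fed into Gluskin's inequality rather than through Fourier decay; and the construction, being basis-free, sidesteps the coordinate-alignment difficulty you flag.
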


As a consequence of Theorems \ref{res_existsK} and \ref{res_slicing_k} we deduce that the estimate \eqref{eq_estimate_BP_upper} is also asymptotically sharp, up to logarithmic terms.
\begin{corollary}
	\label{res_estimate_BP_lower}
	There exists a universal constant $c>0$ such that for every $n \geq 2$ and $1 \leq k \leq n-1$, there exists an origin-symmetric convex body $K \subseteq \R^n$ such that 
	\[\dovr(K, \BP) \geq c \sqrt{n/k} (\log(n))^{-1/2}.\]
\end{corollary}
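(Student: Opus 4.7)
The plan is to derive the corollary directly from the two theorems already on the table: use Theorem \ref{res_existsK} to supply a convex body and density whose codimension-$k$ sections are small, and invoke the ``reverse'' slicing inequality of Theorem \ref{res_slicing_k} to convert smallness of sections into a lower bound on $\dovr(K,\BP)$. Concretely, I would take the pair $(K,f)$ produced by Theorem \ref{res_existsK}; since the outer volume ratio distance is invariant under positive dilations of $K$, I may renormalise so that $\vol{K}=1$. If $K$ is replaced by $\lambda K$ and $f$ by $f_\lambda(x)=\lambda^{-n}f(x/\lambda)$, then $f_\lambda$ is still an even continuous probability density and $\int_{(\lambda K)\cap F} f_\lambda = \lambda^{-k}\int_{K\cap F} f$; choosing $\lambda=\vol{K}^{-1/n}$ only multiplies the upper bound from Theorem \ref{res_existsK} by $\vol{K}^{k/n}$, which can be absorbed into the universal constant provided the construction of Theorem \ref{res_existsK} yields $\vol{K}^{1/n}$ bounded by a universal constant (a mild requirement on that construction, which is natural to build in from the outset).

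With $\vol{K}=1$, the second half of Theorem \ref{res_slicing_k} gives
\[
	\max_{F \in G_{n,n-k}} \int_{K \cap F} f \;\geq\; c^{-k}\,\dovr(K,\BP)^{-k},
\]
while Theorem \ref{res_existsK} gives
\[
	\max_{F \in G_{n,n-k}} \int_{K \cap F} f \;\leq\; \left(c' \sqrt{\tfrac{k \log(n)}{n}}\right)^{k}.
\]
Chaining these two inequalities and extracting a $k$-th root (both sides are positive, and the exponent $k$ appears symmetrically, which is exactly the mechanism by which the codimension drops out) yields
\[
	\dovr(K, \BP) \;\geq\; \frac{1}{cc'}\,\sqrt{\frac{n}{k \log(n)}},
\]
which is the content of the corollary.

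There is no genuine obstacle here: the depth of the result is entirely inside Theorem \ref{res_existsK}, and once that is in hand the corollary is a one-line combination of two inequalities. The only technical point worth checking carefully is the scale normalisation $\vol{K}=1$, to make sure the $\vol{K}^{k/n}$ factor picked up when applying the volume-normalised form of Theorem \ref{res_slicing_k} does not spoil the $k$-th root; this is ensured either by verifying the construction in Theorem \ref{res_existsK} has bounded volume per dimension, or equivalently by applying the unnormalised form \eqref{eq_slicing_k} of Theorem \ref{res_slicing_k} directly and tracking the volume factor through to the end.
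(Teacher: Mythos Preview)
Your approach is correct and is exactly the argument the paper has in mind: the corollary is stated as an immediate consequence of Theorems \ref{res_existsK} and \ref{res_slicing_k}, and your chaining of the two inequalities followed by taking $k$-th roots is precisely that deduction. The only superfluous part is your discussion of volume renormalisation: the body $K$ produced in the proof of Theorem \ref{res_existsK} already satisfies $\vol{K}=1$ by construction (cf.\ the abstract and the last displayed proof in the paper), so the second inequality of Theorem \ref{res_slicing_k} applies directly with no rescaling needed.
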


Combining Corollary \ref{res_estimate_BP_lower} and Theorem \ref{res_estimate_BP_upper} we obtain
\[c \sqrt{n/k} (\log(n))^{-1/2} \leq \sup_K \dovr(K, \BP) \leq c' \sqrt{n/k} \left(\log\left(\frac{en}k\right)\right)^{3/2}\]
where the supremum runs over all origin-symmetric convex bodies $K \subseteq \R^n$, and $c,c'>0$ are universal constants.

It should be noted that our proof is a simplification of those used in \cite{klartag2018example} and \cite{Klartag2020} since we do not intend to suppress any logarithmic term.
We thank prof. A. Koldobsky for suggesting this approach, and for many other useful discussions related to this article.

The author was supported by Grant RYC2021-031572-I, funded by the Ministry of Science and Innovation/State Research Agency/10.13039 /501100011033
and by the E.U. Next Generation EU/Recovery, Transformation and Resilience Plan.

\section{Proof of Theorem \ref{res_existsK} }

Theorem \ref{res_existsK}  shall be proven at the end of the section after some auxiliary results.
Throughout the section $c,c', c''$ will denote universal constants that may change from one line to the next one. We denote $\varphi(t) = e^{-t^2}$, the Euclidean unit ball of $\R^n$ is denoted by $\B$, the canonical vectors of $\R^n$ are $e_1, \ldots, e_n$ and $\Gamma$ is the gamma function.
For $F \in G_{n,n-k}$ and $x \in \R^n$, the Euclidean distance from $x$ to $F$ is denoted by $d(F,x)$.

First we recall two elementary inequalities from \cite{haddad2023radon} and \cite{klartag2018example}:
\begin{lemma}[{\cite[Lemma 2.4]{haddad2023radon}}]
	\label{res_estimate_gamma}
	For $0 \leq \mu \leq \lambda$ we have
	\[\lambda^\mu \Gamma(\lambda-\mu) \geq \Gamma(\lambda).\]
\end{lemma}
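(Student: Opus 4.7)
This is a classical Wendel-type bound on the ratio $\Gamma(\lambda)/\Gamma(\lambda-\mu)$. The plan is to first establish the one-step version $\Gamma(x+s) \leq x^s \Gamma(x)$ for $x > 0$ and $s \in [0,1]$ via a short Hölder argument on the integral representation of $\Gamma$, and then bootstrap to arbitrary $\mu \in [0,\lambda]$ using the functional equation $\Gamma(y+1) = y\Gamma(y)$.

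For the one-step estimate I would write
\[
\Gamma(x+s) \;=\; \int_0^\infty t^{x+s-1} e^{-t}\, dt \;=\; \int_0^\infty \bigl(t^x e^{-t}\bigr)^{s} \bigl(t^{x-1} e^{-t}\bigr)^{1-s}\, dt
\]
and apply Hölder's inequality with conjugate exponents $1/s$ and $1/(1-s)$ (the endpoints $s=0,1$ being trivial). The right-hand side collapses to $\Gamma(x+1)^{s} \Gamma(x)^{1-s} = (x\Gamma(x))^{s} \Gamma(x)^{1-s} = x^{s} \Gamma(x)$, which is the desired bound.

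To reach the general statement, decompose $\mu = N + s$ with $N$ a non-negative integer and $s \in [0,1)$, and assume $\lambda > N$; the remaining case $\lambda = N = \mu$ makes $\Gamma(\lambda-\mu)=+\infty$ and the inequality is vacuous. Iterating the functional equation gives
\[
\Gamma(\lambda) \;=\; (\lambda-1)(\lambda-2)\cdots(\lambda-N)\,\Gamma(\lambda-N) \;\leq\; \lambda^{N}\, \Gamma(\lambda-N),
\]
and the one-step bound applied to $\Gamma((\lambda-\mu)+s)$ yields $\Gamma(\lambda-N) \leq (\lambda-\mu)^{s}\Gamma(\lambda-\mu) \leq \lambda^{s} \Gamma(\lambda-\mu)$. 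Multiplying the two estimates produces $\Gamma(\lambda) \leq \lambda^{\mu}\Gamma(\lambda-\mu)$, which is the claim.

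I do not foresee any real obstacle: the only non-trivial ingredient is the Hölder step, which is precisely the standard route to log-convexity of $\Gamma$. An alternative one-shot approach would be to define $F(\mu) = \mu \log \lambda + \log \Gamma(\lambda-\mu) - \log \Gamma(\lambda)$, observe $F(0)=0$ and $F'(\mu) = \log \lambda - \psi(\lambda-\mu) \geq 0$ using monotonicity of the digamma $\psi$ together with the inequality $\psi(t) \leq \log t$; this is slicker but relies on a less elementary fact about $\psi$, so I prefer the iterated Hölder argument above.
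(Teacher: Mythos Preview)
Your argument is correct. The H\"older step yields precisely the log-convexity bound $\Gamma(x+s)\le\Gamma(x)^{1-s}\Gamma(x+1)^{s}=x^{s}\Gamma(x)$ for $x>0$ and $s\in[0,1]$, and iterating the functional equation handles the integer part of $\mu$. One cosmetic remark: the vacuous boundary case is $\lambda=\mu$ in general, not only the integer situation $\lambda=N=\mu$ (for instance $\lambda=\mu=3/2$ has $N=1<\lambda$ but $\lambda-\mu=0$); once you simply assume $\mu<\lambda$ at the outset, the one-step bound applies with $x=\lambda-\mu>0$ and everything goes through cleanly.

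As for comparison with the paper: the paper does not prove this lemma at all --- it is quoted without proof from the cited reference --- so there is no in-paper argument to set yours against. What you have written is the standard Wendel-type proof (equivalently, log-convexity of $\Gamma$ combined with the recursion), and either of your two suggested routes would be perfectly adequate here.
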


\begin{lemma}[{\cite[Lemma 3.1]{klartag2018example}}]
	\label{res_estimate_expectation_general}
	Let $Y_1, \ldots, Y_N$ be independent, identically distributed random variables attaining values in the interval $[0, 1]$. Let $p \in [0, 1]$ satisfy $p \geq \E Y_i$. Then,
	\[\P\left( \frac 1N \sum_{i=1}^N Y_i \geq 3p \right) \leq e^{-pN}.\]
\end{lemma}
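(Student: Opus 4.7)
The statement to prove is a Chernoff-type concentration bound for bounded i.i.d.\ random variables, of the form
\[
\P\!\left(\frac{1}{N}\sum_{i=1}^N Y_i \geq 3p\right) \leq e^{-pN},
\]
where $Y_i \in [0,1]$ and $p \geq \E Y_i$. My plan is to prove this by the exponential Markov (Chernoff) method, exploiting boundedness of the $Y_i$ to control the moment generating function.

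\textbf{Step 1: Exponential Markov inequality.} For any $t > 0$, Markov's inequality applied to $\exp(t\sum_i Y_i)$ gives
\[
\P\!\left(\sum_{i=1}^N Y_i \geq 3pN\right) \;\leq\; e^{-3tpN}\, \prod_{i=1}^N \E[e^{tY_i}],
\]
using independence of the $Y_i$.

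\textbf{Step 2: Bounding the MGF using $Y_i \in [0,1]$.} Since $x \mapsto e^{tx}$ is convex and $Y_i \in [0,1]$, the chord bound yields $e^{tY_i} \leq (1-Y_i) + Y_i\, e^t = 1 + (e^t-1)Y_i$. Taking expectations and using $\E Y_i \leq p$ together with $1 + u \leq e^u$,
\[
\E[e^{tY_i}] \;\leq\; 1 + (e^t-1)\E Y_i \;\leq\; 1 + (e^t-1)p \;\leq\; \exp\!\big((e^t-1)p\big).
\]
Combining Steps 1 and 2,
\[
\P\!\left(\sum_{i=1}^N Y_i \geq 3pN\right) \;\leq\; \exp\!\big(Np\,[\,e^t - 1 - 3t\,]\big).
\]

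\textbf{Step 3: Choice of $t$.} It remains to choose $t>0$ so that $e^t - 1 - 3t \leq -1$, i.e.\ $e^t \leq 3t$. For instance, $t = \log 3$ gives $e^t - 1 - 3t = 2 - 3\log 3 < -1$ (numerically $\approx -1.30$), and plugging this into the bound of Step 2 immediately yields
\[
\P\!\left(\frac{1}{N}\sum_{i=1}^N Y_i \geq 3p\right) \;\leq\; e^{-pN},
\]
as desired.

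\textbf{Main obstacle.} There is no genuine obstacle here: the argument is a textbook Chernoff bound tailored to the factor $3$ in the deviation. The only mild point of care is verifying numerically that the chosen $t$ makes the coefficient $e^t - 1 - 3t$ at most $-1$ so that the clean bound $e^{-pN}$ (with no extra constants) is obtained; any $t$ slightly larger than the solution of $e^t = 3t$ would also work. The chord bound $e^{tx} \leq 1+(e^t-1)x$ on $[0,1]$ and the elementary $1+u \leq e^u$ are what make the boundedness hypothesis $Y_i \in [0,1]$ essential.
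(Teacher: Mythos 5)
Your proof is correct: the exponential Markov inequality, the chord bound $e^{tY_i}\leq 1+(e^t-1)Y_i$ on $[0,1]$, and the check that $e^t-1-3t\leq -1$ for $t=\log 3$ all hold, giving exactly $e^{-pN}$. The paper itself only cites this lemma from Klartag--Koldobsky without reproducing a proof, and your argument is the same standard Chernoff-type computation used there (the reference simply fixes $t=1$, where $e-1-3=e-4<-1$, instead of your $t=\log 3$), so there is nothing essentially different in your route.
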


The next elementary lemma concerns the parametrization of the sphere with respect to the so-called {\it bispherical coordinates}. It can be found in \cite[formula (3.9)]{markoe2006analytic}.
\begin{lemma}
	\label{res_changeofvariables}
	Consider the decomposition $\R^n = \R^m \times \R^k$ with $m+k = n$ and $m,k \in [1,n]$.
	If $f:S^{n-1} \to \R$ is a measurable function, then
	\begin{multline}
		\int_{S^{n-1}} f(v) dv = \int_{S^{m-1}} \int_{S^{k-1}} \int_0^{\pi/2} f(\cos(\alpha) x, \sin(\alpha) y) \\ \times\cos(\alpha)^{m-1} \sin(\alpha)^{k-1} d\alpha dy dx
	\end{multline}
	where integration on the sphere $S^{l}$ is with respect to the $l$-dimensional Hausdorff measure.
	The cases $k=1$ and $m=1$ are also included, the measure in $S^0=\{\pm 1\}$ being the counting measure.
\end{lemma}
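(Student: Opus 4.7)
The approach is to compute $\int_{\R^n} g(w)\, dw$ in two different ways for a suitable family of test functions $g$ and match the integrands, obtaining the claimed formula after stripping off the radial variable.

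First I would apply Fubini to the splitting $\R^n = \R^m \times \R^k$ and then use standard polar coordinates in each factor, writing
\[
\int_{\R^n} g(w)\, dw = \int_0^\infty \int_0^\infty \int_{S^{m-1}} \int_{S^{k-1}} g(rx, sy)\, r^{m-1} s^{k-1}\, dy\, dx\, ds\, dr.
\]
Next I would make the planar change of variables $(r,s) \mapsto (\rho, \alpha)$ given by $r = \rho \cos \alpha$, $s = \rho \sin \alpha$ with $\rho > 0$ and $\alpha \in (0, \pi/2)$; its Jacobian is $\rho$, so $dr\, ds = \rho\, d\rho\, d\alpha$ and $r^{m-1} s^{k-1} = \rho^{n-2} \cos(\alpha)^{m-1} \sin(\alpha)^{k-1}$. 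Substituting and rearranging yields
\[
\int_{\R^n} g(w)\, dw = \int_0^\infty \rho^{n-1} J(\rho)\, d\rho,
\]
where $J(\rho) = \int_{S^{m-1}} \int_{S^{k-1}} \int_0^{\pi/2} g(\rho \cos\alpha\, x, \rho \sin\alpha\, y) \cos(\alpha)^{m-1} \sin(\alpha)^{k-1}\, d\alpha\, dy\, dx$.

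Direct polar coordinates in $\R^n$ give the alternative expression
\[
\int_{\R^n} g(w)\, dw = \int_0^\infty \rho^{n-1} \int_{S^{n-1}} g(\rho v)\, dv\, d\rho.
\]
I would then specialise to $g(w) = f(w/|w|)\, h(|w|)$, where $f$ is continuous on $S^{n-1}$ and $h:(0,\infty) \to \R$ is continuous and compactly supported. Both inner integrals factor as $h(\rho)$ times a quantity independent of $\rho$, and varying $h$ inside $\int_0^\infty \rho^{n-1} h(\rho)\, d\rho$ forces those two $\rho$-independent factors to coincide. This proves the identity for continuous $f$, and the general measurable case follows by standard density arguments.

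The only delicate point is the boundary cases $m=1$ or $k=1$. In those, polar coordinates in $\R^1$ take the form $\int_{\R} h(u)\, du = \int_0^\infty (h(r) + h(-r))\, dr$, which is exactly integration over $S^0 = \{\pm 1\}$ against the counting measure with weight $r^0 = 1$, so the derivation goes through verbatim. The parametrization degenerates at $\alpha = 0$ and $\alpha = \pi/2$, but those values pull back to a Hausdorff-measure-zero subset of $S^{n-1}$ and are harmless. The one non-routine ingredient is the Jacobian of the planar map $(\rho, \alpha) \mapsto (\rho \cos\alpha, \rho \sin\alpha)$, which is elementary.
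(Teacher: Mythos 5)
Your argument is correct, but it proceeds along a different route than the paper. The paper cites the formula from Markoe and (in its own write-up) proves it by treating $T(x,y,\alpha)=(\cos(\alpha)x,\sin(\alpha)y)$ as an explicit parametrization of a full-measure subset of $S^{n-1}$ by $S^{m-1}\times S^{k-1}\times(0,\pi/2)$: the partial derivatives of $T$ are pairwise orthogonal with norms $\cos(\alpha)$, $\sin(\alpha)$ and $1$, so the surface Jacobian is $\cos(\alpha)^{m-1}\sin(\alpha)^{k-1}$ and the identity is just the area formula for parametrized hypersurfaces. You instead integrate a product test function $g(w)=f(w/|w|)h(|w|)$ over $\R^n$ in two ways --- Fubini plus polar coordinates in each factor followed by the planar change $(r,s)=(\rho\cos\alpha,\rho\sin\alpha)$, versus direct polar coordinates in $\R^n$ --- and then strip off the radial factor $\int_0^\infty\rho^{n-1}h(\rho)\,d\rho$ (a single $h$ with nonzero such integral already suffices). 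Your approach buys elementarity: it needs only the standard polar-coordinate formula and a $2\times 2$ Jacobian, avoiding the $(n-1)$-dimensional surface-measure machinery, at the cost of an indirect identification step (passing from continuous $f$ to measurable $f$ amounts to identifying two Borel measures on $S^{n-1}$ that agree on continuous functions, which is standard but should be acknowledged as a Riesz-representation/monotone-class argument, with the convention that ``measurable'' means nonnegative or integrable so both sides are defined). The paper's parametrization argument is more direct and gives the Jacobian geometrically, but requires the change-of-variables theorem for maps into hypersurfaces; your treatment of the degenerate cases $m=1$ or $k=1$ via $S^0$ with counting measure and of the null set $\alpha\in\{0,\pi/2\}$ is handled equally well in both approaches.
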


\begin{lemma}
	\label{res_estimate_expectation}
	Let $n \geq 1$ and $\theta$ be a random point uniformly distributed in $S^{n-1}$.
	Let $F$ be any subspace of codimension $k$ with $1 \leq k \leq n$, then
	\[\E( \varphi(d(F, n \theta) ) ) \leq n^{-k/2}.\]
	(Notice that there is no constant in the right-hand side.)
\end{lemma}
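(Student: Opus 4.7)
The plan is to reduce the expectation to a one-dimensional integral via rotational invariance and bispherical coordinates, then bound it using Lemma \ref{res_estimate_gamma}. By rotational invariance of the uniform measure on $S^{n-1}$ we may take $F = \R^{n-k}\times\{0\}$, so that $d(F, n\theta) = n\sqrt{\theta_{n-k+1}^2 + \dots + \theta_n^2}$. Assume first $1 \leq k \leq n-1$ and apply Lemma \ref{res_changeofvariables} to the splitting $\R^n = \R^{n-k}\times\R^k$: writing $\theta = (\cos\alpha\,x, \sin\alpha\,y)$ with $(x,y,\alpha) \in S^{n-k-1}\times S^{k-1}\times(0,\pi/2)$ gives $d(F, n\theta) = n\sin\alpha$, so the integrand depends only on $\alpha$. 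After substituting $t = \sin^2\alpha$ and using $\vol[d-1]{S^{d-1}} = 2\pi^{d/2}/\Gamma(d/2)$, the expectation becomes
\[
\E(\varphi(d(F, n\theta))) = \frac{\Gamma(n/2)}{\Gamma(k/2)\Gamma((n-k)/2)}\int_0^1 e^{-n^2 t}\,t^{k/2 - 1}\,(1-t)^{(n-k)/2 - 1}\,dt,
\]
i.e.\ the Laplace transform at $n^2$ of the $\mathrm{Beta}(k/2,(n-k)/2)$ density, as expected since $\theta_{n-k+1}^2 + \dots + \theta_n^2$ has that distribution.

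In the principal case $n - k \geq 2$ the factor $(1-t)^{(n-k)/2-1} \leq 1$ can be dropped, and the remaining integral is dominated by $\int_0^\infty e^{-n^2 t}t^{k/2-1}\,dt = \Gamma(k/2)/n^k$. This cancels the $\Gamma(k/2)$ in the denominator and leaves $\Gamma(n/2)/(\Gamma((n-k)/2)\,n^k)$. Applying Lemma \ref{res_estimate_gamma} with $\lambda = n/2$ and $\mu = k/2$ gives $\Gamma(n/2)/\Gamma((n-k)/2) \leq (n/2)^{k/2}$, so $\E(\varphi(d(F, n\theta))) \leq 2^{-k/2}\,n^{-k/2} \leq n^{-k/2}$, as required.

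The main obstacle is the boundary case $n - k = 1$, where the factor $(1-t)^{-1/2}$ introduces an integrable singularity at $t = 1$. Here my plan is to use the substitution $t = 1 - u^2$, which gives
\[
\int_0^1 e^{-n^2 t}t^{(n-3)/2}(1-t)^{-1/2}dt = 2\int_0^1 e^{-n^2(1-u^2)}(1-u^2)^{(n-3)/2}du,
\]
and then to use the elementary inequalities $1 - u^2 \geq 1 - u$ (to bound the exponential by $e^{-n^2(1-u)}$) and $(1-u^2)^{(n-3)/2} \leq 2^{(n-3)/2}(1-u)^{(n-3)/2}$ for $n \geq 3$ (from $1+u \leq 2$). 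After substituting $v = 1 - u$ the right-hand side is again controlled by a Gamma integral, and Lemma \ref{res_estimate_gamma} (now with $\mu = (n-1)/2$) produces the bound $n^{-(n-1)/2}$ with no additional slack. The edge case $n=2$ uses the analogous $(1-u^2)^{-1/2} \leq (1-u)^{-1/2}$, giving $\E \leq 1/\sqrt\pi \leq 1/\sqrt 2 = n^{-k/2}$. Finally, when $k = n$ the subspace $F = \{0\}$ so that $d(F, n\theta) = n$ is constant, and the claim reduces to the elementary inequality $e^{-n^2} \leq n^{-n/2}$ valid for every $n \geq 1$. No new ingredient is required beyond the Gamma inequality already used in the main case.
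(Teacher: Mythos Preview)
Your proof is correct and follows essentially the same route as the paper: rotational invariance, bispherical coordinates, and the Gamma inequality of Lemma~\ref{res_estimate_gamma}; your substitution $t=\sin^2\alpha$ produces the Beta form while the paper uses $t=\cos\alpha$, but the resulting bounds coincide. You are actually more careful than the paper about the boundary cases $k=n-1$ and $k=n$, which the paper's write-up silently ignores (its factor $(1-t^2)^{(n-k-2)/2}$ is not $\le 1$ there).
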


\begin{proof}
	By rotational invariance, we may assume without loss of generality that $F$ is generated by $e_{k+1}, \ldots, e_n$.
	We decompose $\R^n = \R^k \times \R^{n-k}$ and notice that for every $(x,y) \in \R^k \times \R^{n-k}$ we have $d(F, (x,y)) = |x|$.

	To find the expectation we integrate, use Lemma \ref{res_changeofvariables} and then the change of variables $t= \cos(\alpha)$, $dt = - \sqrt{1-t^2} d\alpha$.
	\begin{align}
		\E( \varphi(d(F, n &\theta) ) )\\
		&= \frac 1{\kappa_n} \int_{S^{n-1}} \varphi(n d(F, \theta)) d\theta \\
		&= \frac 1{\kappa_n} \int_{S^{k-1}} \int_{S^{n-k-1}} \int_0^{\pi/2} \varphi(n \cos(\alpha)) \cos(\alpha)^{k-1} \sin(\alpha)^{n-k-1} d\alpha d x d y \\
		&= \frac {\kappa_k \kappa_{n-k}}{\kappa_n} \int_0^{\pi/2} \varphi(n \cos(\alpha)) \cos(\alpha)^{k-1} \sin(\alpha)^{n-k-1} d\alpha  \\
		&= \frac {\kappa_k \kappa_{n-k}}{\kappa_n} \int_0^1 \varphi(n t) t^{k-1} (1-t^2)^{\frac{n-k-2}2} d t,
	\end{align}
	where $\kappa_m =\frac 2{\Gamma(m/2)} \pi^{m/2}$ is the $m-1$ dimensional volume of $S^{m-1}$.

	To estimate the constant, we use Lemma \ref{res_estimate_gamma}.
	\begin{align}
		\label{eq_kappas}
		\frac {\kappa_k \kappa_{n-k}}{\kappa_n} 
		&=\frac{2 \Gamma(n/2)}{\Gamma(k/2) \Gamma \left(\frac{n-k}{2}\right)} \\
		&\leq 2 \Gamma(k/2)^{-1} \left(\frac n 2\right)^{k/2}.
	\end{align}

	For the integral we use a change of variables $s=nt$.
	\begin{align}
		\label{eq_integrals}
		\int_0^1 \varphi(n t) t^{k-1} (1-t^2)^{\frac{n-k-2}2} d t 
		&= n^{-k} \int_0^n \varphi(s) s^{k-1} \left(1-\left( \frac s n\right)^2\right)^{\frac{n-k-2}2} d s \\
		&\leq n^{-k}\int_0^n \varphi(s) s^{k-1} d s \\
		&= n^{-k} 2^{k/2-1} \Gamma(k/2)
	\end{align}

	Combining \eqref{eq_kappas} and \eqref{eq_integrals} we get the result.
\end{proof}

The Grassmanian $G_{n,n-k}$ of $k$-codimensional subspaces of $\R^n$ can be constructed as the quotient space of the group of rotations $O_n$, by the subgroup of rotations that fix a given $k$-codimensional subspace.
As such, $G_{n,n-k}$ inherits naturally a metric $d$, which is the quotient of the operator norm metric.
This is defined as
\begin{equation}
	\label{def_dG}
d_G(F_1, F_2) = \min\{\|I - T\|_{\rm op} : T \in O_n, T(F_1) = F_2\}
\end{equation}
for $F_1, F_2 \in G_{n,n-k}$.
Here $\|\cdot\|_{\rm op}$ denotes the operator norm, defined by 
\[\|S\|_{\rm op} = \sup_{\theta \in S^{n-1}} |S(\theta)|,\]
for every linear map $S:\R^n \to \R^n$.

First we need the following simple lemma.
\begin{lemma}
	\label{res_lipschitz}
	Let $\theta \in S^{n-1}$ be fixed, then the function $F \in G_{n,n-k} \mapsto d(F, \theta)\in\R$ is Lipschitz with constant $1$.
\end{lemma}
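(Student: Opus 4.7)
The plan is to prove the two-sided inequality
\[|d(F_1, \theta) - d(F_2, \theta)| \leq d_G(F_1, F_2)\]
directly from the definition \eqref{def_dG} of $d_G$, by transporting the orthogonal projection of $\theta$ from one subspace to the other via an optimal rotation.

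More concretely, fix $F_1, F_2 \in G_{n,n-k}$ and pick $T \in O_n$ with $T(F_1) = F_2$ achieving the minimum in \eqref{def_dG}, so that $\|I-T\|_{\operatorname{op}} = d_G(F_1, F_2)$. Let $p_1 \in F_1$ be the orthogonal projection of $\theta$ onto $F_1$, so that $|\theta - p_1| = d(F_1, \theta)$ and $|p_1| \leq |\theta| = 1$. Since $T(p_1) \in T(F_1) = F_2$, the triangle inequality gives
\[d(F_2, \theta) \leq |\theta - T(p_1)| \leq |\theta - p_1| + |(I-T)p_1| \leq d(F_1, \theta) + \|I-T\|_{\operatorname{op}}|p_1| \leq d(F_1, \theta) + d_G(F_1, F_2).\]
For the reverse inequality, I would apply the same argument with $T^{-1}$ in place of $T$, noting that $T^{-1}(F_2) = F_1$ and $\|I - T^{-1}\|_{\operatorname{op}} = \|T^{-1}(T - I)\|_{\operatorname{op}} = \|I - T\|_{\operatorname{op}}$ since $T^{-1}$ is an isometry. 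Combining both inequalities yields the claim.

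There is essentially no obstacle here; the only point requiring a moment's thought is that the projection $p_1$ of the unit vector $\theta$ lies in the unit ball, which is what allows one to bound $|(I-T)p_1|$ by the operator norm $\|I-T\|_{\operatorname{op}}$ alone rather than by $\|I-T\|_{\operatorname{op}}|p_1|$ with a potentially large factor. Everything else is a standard triangle-inequality argument, and the symmetry of $d_G$ plus the invariance of the operator norm under taking inverses of orthogonal transformations closes the estimate.
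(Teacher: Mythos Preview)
Your proof is correct. The paper's argument is a minor variation of yours: instead of transporting the projection $p_1$ via $T$, it transports $\theta$ itself, using that $T$ is an isometry to write $d(F_1,\theta)=d(F_2,T\theta)$ and then the $1$-Lipschitzness of $x\mapsto d(F_2,x)$ to bound $|d(F_2,T\theta)-d(F_2,\theta)|\le|T\theta-\theta|\le\|T-I\|_{\rm op}$, which yields both inequalities at once without invoking $T^{-1}$. The two executions rely on exactly the same ingredients (an optimal $T$ and $|\theta|=1$), so this is essentially the same approach.
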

\begin{proof}
	Let $F_1, F_2 \in G_{n,n-k}$.
	By the definition of $d_G$, there exists $T \in O_n$ with $T(F_1) = F_2, \|T-I\|_{\rm op} = d_G(F_1, F_2)$.
	We have
	\begin{align}
	|d_G(F_1, \theta) - d_G(F_2, \theta)| 
		&= |d_G(F_2, T(\theta)) - d_G(F_2, \theta)| \\
		&\leq |T (\theta)- \theta| \\
		&\leq \|T-I\|_{\rm op} \\
		&\leq d_G(F_1, F_2),
	\end{align}
	and the lemma follows.
\end{proof}

For $\delta>0$, a $\delta$-net with respect to $d_G$ is a finite subset $\mathcal F \subseteq G_{n,n-k}$ such that for every $F \in G_{n,n-k}$ there is $F' \in \mathcal F$ with $d_G(F,F') < \delta$.
We shall use the following result by Szarek \cite{szarek1982nets}.
\begin{theorem}[{\cite[Proposition 8]{szarek1982nets}}]
	\label{res_netGrassmanian}
	For every $0 \leq \delta \leq \sqrt{2} = {\rm diam}(G_{n,n-k})$ there exists a $\delta$-net with respect to $d_G$ of cardinality less than $(c/\delta)^{k(n-k)}$, where $c>0$ is a universal constant.
\end{theorem}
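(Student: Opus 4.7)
The plan is a standard volume/packing argument on the invariant measure of $G_{n,n-k}$. Let $\mu$ denote the unique $O_n$-invariant probability measure on $G_{n,n-k}$ and let $\mathcal F \subseteq G_{n,n-k}$ be a maximal $\delta$-separated set with respect to $d_G$. By maximality $\mathcal F$ is automatically a $\delta$-net, so it suffices to bound $|\mathcal F|$. The open balls $B_{d_G}(F,\delta/2)$, $F\in\mathcal F$, are pairwise disjoint, hence
\[
|\mathcal F|\cdot \min_{F}\mu(B_{d_G}(F,\delta/2))\leq 1.
\]
By $O_n$-invariance of both $d_G$ and $\mu$, this minimum does not depend on $F$, so the problem reduces to establishing a universal lower bound
\[
\mu(B_{d_G}(F_0,r))\geq (c_1 r)^{k(n-k)}
\]
for every $r\in(0,r_0]$, some fixed $F_0$, some universal $c_1>0$ and some fixed threshold $r_0\in(0,1]$; the range $r\in(r_0,\sqrt 2]$ is then handled by absorbing an absolute factor into the base constant $c$ in the final bound.

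To obtain this lower bound I would fix $F_0=\operatorname{span}(e_{k+1},\ldots,e_n)$ and parametrize a neighborhood of $F_0$ by the graph chart $A\mapsto F_A:=\{x+Ax:x\in F_0\}$, where $A$ ranges over $\operatorname{Hom}(F_0,F_0^\perp)\cong \R^{k(n-k)}$. A singular value decomposition $A=U\Sigma V^t$ reduces both the metric and the Jacobian computations to independent two-dimensional blocks. On the metric side, the principal angles between $F_A$ and $F_0$ equal $\arctan(\sigma_i(A))$, and this yields that $d_G(F_A,F_0)$ is comparable to $\|A\|_{\rm op}$, uniformly on $\|A\|_{\rm op}\leq 1$, with universal constants. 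On the measure side, the density of the pullback of $\mu$ by the graph chart is proportional to $\det(I+A^tA)^{-n/2}$, a quantity bounded below by a universal positive constant on $\{\|A\|_{\rm op}\leq r_0\}$ for any fixed $r_0<1$. Combining the two, the preimage of $B_{d_G}(F_0,r)$ under the chart contains a Euclidean ball of radius comparable to $r$ in $\R^{k(n-k)}$, whose Lebesgue volume is of order $r^{k(n-k)}$, giving the required lower bound.

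The main obstacle I anticipate is the measure computation: the density $\det(I+A^tA)^{-n/2}$ has a dimension-dependent exponent, and on the full unit ball $\{\|A\|_{\rm op}\leq 1\}$ it decays like $2^{-n(n-k)/2}$, which would ruin the universality of the constant. The fix is to confine the chart to a ball $\{\|A\|_{\rm op}\leq r_0\}$ for a small absolute $r_0$, on which the density is within a universal factor of $1$ and the metric comparison is tight. Verifying the exact constants in the metric and Jacobian expansions through the two-dimensional block reduction is the step requiring most care; once done, the packing bound, and hence the stated net cardinality, follow at once.
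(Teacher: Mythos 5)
This statement is not proved in the paper at all: it is quoted verbatim from Szarek \cite{szarek1982nets}, whose argument runs through covering estimates for the orthogonal group and its quotients (homogeneous-space techniques relating nets of $O_n$, of the stabilizer subgroup, and of the quotient $G_{n,n-k}$), precisely because that is how one keeps the constant universal. Your overall skeleton (maximal $\delta$-separated set, invariance of $\mu$, reduction to a small-ball lower bound $\mu(B_{d_G}(F_0,r))\geq (c_1 r)^{k(n-k)}$) is a legitimate alternative route, and the metric comparison $d_G(F_A,F_0)\asymp \|A\|_{\rm op}$ on $\|A\|_{\rm op}\leq 1$ via principal angles is fine. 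But the small-ball estimate, which is the whole difficulty, is not actually established by what you wrote, and one of the claims you lean on is false.

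Concretely: on $\{\|A\|_{\rm op}\leq r_0\}$ the density $\det(I+AA^t)^{-n/2}$ is \emph{not} within a universal factor of $1$; its infimum there is of order $(1+r_0^2)^{-nk/2}=e^{-\Theta(nk r_0^2)}$, exponentially small in $nk$ no matter how small the absolute constant $r_0$ is. This particular loss happens to be of the admissible form $C^{-k(n-k)}$ when $k\leq n-k$ (and one must pass to orthogonal complements when $k>n-k$, a case distinction you never make), so it could be absorbed, but your stated justification does not show this. More seriously, you never control the normalization of $\mu$ in the chart: the bound you need is a ratio, namely $\int_{\{\|A\|_{\rm op}\leq c_2 r\}}\det(I+AA^t)^{-n/2}dA$ divided by $Z_{n,k}=\int_{\R^{k(n-k)}}\det(I+AA^t)^{-n/2}dA$, and you must show $Z_{n,k}\leq C^{k(n-k)}\,\mathrm{vol}\{\|A\|_{\rm op}\leq 1\}$ for a universal $C$. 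That is a genuinely nontrivial computation (it amounts to ratios of multivariate Gamma functions, i.e.\ Grassmannian/Stiefel volumes), and crude bounds such as $\det(I+AA^t)\geq 1+\|A\|_{\rm op}^2$ fail to make the integral even comparable, since the dimension $k(n-k)$ exceeds the exponent $n$. Finally, replacing the operator-norm ball by a Euclidean ball of radius $\asymp r$ and saying its volume is ``of order $r^{k(n-k)}$'' hides a factor $\kappa_d$ with $\kappa_d^{1/d}\asymp d^{-1/2}$, $d=k(n-k)$, which is exactly the kind of dimension-dependent factor that can destroy universality unless it is shown to cancel against $Z_{n,k}$. Until the normalization and these volume factors are tracked and shown to cancel at the level of $d$-th roots, the claimed universal constant $c$ is not established; this is the gap.
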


\begin{proposition}
	\label{res_points}
	There exist a universal constant $c>0$ and $N = c n^{\frac k2 + 4}$ points $\theta_1, \ldots, \theta_N$ such that
	\[\frac 1N \sum_{i=1}^N \varphi( d(F, n \theta) ) \leq 4 n^{-k/2}\]
	for every $F \in G_{n,n-k}$.
\end{proposition}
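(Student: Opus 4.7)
The plan is a probabilistic construction: take $\theta_1,\dots,\theta_N$ to be i.i.d.\ uniform on $S^{n-1}$, use Lemma \ref{res_estimate_expectation_general} with Lemma \ref{res_estimate_expectation} for each subspace in a $\delta$-net, and then extend the bound to all of $G_{n,n-k}$ by a Lipschitz argument based on Lemma \ref{res_lipschitz}.

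More precisely, fix $F \in G_{n,n-k}$ and set $Y_i^F = \varphi(d(F, n\theta_i))$. Since $\varphi$ takes values in $(0,1]$, the variables $Y_i^F$ lie in $[0,1]$; Lemma \ref{res_estimate_expectation} gives $\E Y_i^F \leq n^{-k/2}$, so Lemma \ref{res_estimate_expectation_general} with $p = n^{-k/2}$ yields
\[
\P\!\left( \frac{1}{N} \sum_{i=1}^N Y_i^F \geq 3 n^{-k/2} \right) \leq \exp(- n^{-k/2} N).
\]
Next I choose $\delta = n^{-k/2 - 1}$ and, by Theorem \ref{res_netGrassmanian}, a $\delta$-net $\mathcal F \subseteq G_{n,n-k}$ of cardinality at most $(c/\delta)^{k(n-k)} = (c\, n^{k/2+1})^{k(n-k)}$. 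A union bound over $\mathcal F$ gives
\[
\P\!\left( \exists F' \in \mathcal F :\ \frac{1}{N} \sum_{i=1}^N Y_i^{F'} \geq 3 n^{-k/2} \right) \leq (c\, n^{k/2+1})^{k(n-k)} \exp(- n^{-k/2} N).
\]
Since $\log|\mathcal F| \leq k(n-k)(k/2+1)\log(cn) = O(n^3 \log n)$, taking $N = c'\, n^{k/2 + 4}$ with a large enough universal $c'$ makes the right-hand side strictly less than $1$. Hence there exist deterministic points $\theta_1,\dots,\theta_N$ such that the averaged bound holds simultaneously for every $F' \in \mathcal F$.

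Finally I pass from $\mathcal F$ to the full Grassmannian by Lipschitz continuity. The map $F \mapsto d(F,\theta)$ is $1$-Lipschitz by Lemma \ref{res_lipschitz}, so $F \mapsto d(F, n\theta)$ is $n$-Lipschitz; and $\varphi(t) = e^{-t^2}$ satisfies $|\varphi'(t)| = 2t e^{-t^2} \leq 1$, so it is $1$-Lipschitz. Thus $F \mapsto \varphi(d(F, n\theta_i))$ is $n$-Lipschitz uniformly in $i$, and the empirical average inherits the same Lipschitz constant. For any $F \in G_{n,n-k}$, choosing $F' \in \mathcal F$ with $d_G(F,F') < \delta$ gives
\[
\frac{1}{N} \sum_{i=1}^N \varphi(d(F, n\theta_i)) \leq \frac{1}{N} \sum_{i=1}^N \varphi(d(F', n\theta_i)) + n\delta \leq 3 n^{-k/2} + n^{-k/2} = 4 n^{-k/2},
\]
which is the desired estimate.

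The only delicate step is balancing the three quantities $N$, $\delta$, and $|\mathcal F|$: the union bound requires $n^{-k/2}N$ to dominate $\log|\mathcal F| \sim k(n-k)(k/2+1)\log n$, while the Lipschitz step requires $n\delta \leq n^{-k/2}$, i.e.\ $\delta \leq n^{-k/2-1}$. With these two choices the counting term contributes the factor $n^{k(n-k)(k/2+1)}$, which is comfortably absorbed by taking $N$ polynomial in $n$ of degree $k/2+4$; the resulting exponent matches the one claimed.
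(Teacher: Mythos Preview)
Your proposal is correct and follows essentially the same route as the paper: random points, Lemma~\ref{res_estimate_expectation} plus Lemma~\ref{res_estimate_expectation_general} for each subspace in a $\delta$-net with $\delta=n^{-k/2-1}$ from Theorem~\ref{res_netGrassmanian}, a union bound handled by the choice $N=c\,n^{k/2+4}$, and then extension to all of $G_{n,n-k}$ via the $n$-Lipschitz estimate from Lemma~\ref{res_lipschitz}. Your argument is in fact slightly more explicit than the paper's, since you spell out that $|\varphi'(t)|=2te^{-t^2}\leq 1$ to justify the Lipschitz constant of the composed map, which the paper leaves implicit.
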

\begin{proof}
	Let $\delta = n^{-k/2-1}$ and consider the corresponding $\delta$-net $\mathcal F$ of cardinality less than $(c' n^{k/2+1})^{k(n-k)} \leq e^{c'' n^4}$.
	Take $\theta_1, \ldots, \theta_N$ independent random points uniformly distributed in $S^{n-1}$.
	By Lemma \ref{res_estimate_expectation} we have $\E( \varphi( d(F, n \theta_i) ) ) \leq n^{-k/2}$.
	Applying Lemma \ref{res_estimate_expectation_general}, for every $F \in \mathcal F$ we have
	\begin{align}
		\P\left(\frac 1N \sum_{i=1}^N \varphi( d(F, n \theta_i) ) \geq 3 n^{-k/2}\right) 
		&\leq e^{-n^{-k/2} N}
		= e^{-cn^4}
	\end{align}
	\[\P\left(\exists F \in \mathcal F, \frac 1N \sum_{i=1}^N \varphi( d(F, n \theta_i) ) \geq 3 n^{-k/2}\right) \leq e^{c''n^4} e^{-c n^{4}}\]
	which is less than $1$ for all $1\leq k \leq n$, $n \geq 2$, for some universal constant $c>0$ depending on $c''$.
	We get the existence of $\theta_1, \ldots, \theta_N$ such that
	\[\frac 1N \sum_{i=1}^N \varphi( d(F, n \theta_i) ) \leq 3 n^{-k/2}\]
	for every $F \in \mathcal F$.

	By Lemma \ref{res_lipschitz}, the function $F \mapsto \frac 1N \sum_{i=1}^N \varphi( d(F, n \theta_i) )$ is Lipschitz with constant $n$.
	Then for any $F \in G_{n,n-k}$,
	\[\frac 1N \sum_{i=1}^N \varphi( d(F, n \theta_i) ) \leq n \delta +3 n^{-k/2} = 4 n^{-k/2}\]
	and the proposition follows.
\end{proof}

Let $\gamma_n(x) = (2\pi)^{-n/2} e^{-|x|^2/2}$ denote the standard Gaussian probability measure.
\begin{definition}
	\label{def_K_f}
	Let $\theta_1, \ldots, \theta_N$ be the points obtained in Proposition \ref{res_points}.
	Define the function $f_0(x) = \frac 1M \sum_{i=1}^M \gamma_n(x-p_i)$, where 
	\[\{p_1, \ldots, p_M\} = \{n\theta_1, -n \theta_1, n \theta_2, -n \theta_2, \ldots, n\theta_N, -n \theta_N\},\]
	and $M=2N$.

Define the convex set $K_0$ as the convex hull of the points $p_1, \ldots, p_M,$ and $\pm n e_1, \ldots, \pm n e_n$.
\end{definition}

The set $K_0$ and the function $f_0$ will satisfy the necessary properties of Theorem \ref{res_existsK}, after normalization and rescaling.
These properties will be stated separately in the following three propositions.

\begin{proposition}
	\label{res_prop_section}
	Let $f_0$ be as in Definition \ref{def_K_f}.
Let $F$ be any subspace of codimension $k$ with $1 \leq k \leq n-1$, then
	\[
		\int_{F} f_0 = (2\pi)^{-k/2} \frac 1M \sum_{i=1}^M \varphi(d(F, p_i)) \leq 4(2\pi n)^{-k/2}.
	\]
\end{proposition}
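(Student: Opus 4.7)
The plan is direct: reduce the integral to a single Gaussian computation, then invoke Proposition~\ref{res_points}. The main identity is that for any $p \in \R^n$ and any linear subspace $F$ of codimension $k$,
\[
  \int_F \gamma_n(x - p)\, dx = (2\pi)^{-k/2}\, \varphi(d(F, p)).
\]
To verify this, I would write $p = p_F + p_{F^\perp}$ with $p_F$ the orthogonal projection of $p$ onto $F$ and $p_{F^\perp} \in F^\perp$, so that for $x \in F$ the Pythagorean identity gives $|x - p|^2 = |x - p_F|^2 + d(F, p)^2$. The standard Gaussian then factors as a product of a function of $|x - p_F|^2$ and the scalar $e^{-d(F,p)^2/2}$, and integrating over the $(n-k)$-dimensional translate of $F$ contributes a factor $(2\pi)^{(n-k)/2}$, which combines with the $(2\pi)^{-n/2}$ prefactor to leave the stated $(2\pi)^{-k/2}\, \varphi(d(F, p))$.

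Summing the above over $i$ and dividing by $M$ yields the equality
\[
  \int_F f_0 = (2\pi)^{-k/2}\, \frac{1}{M} \sum_{i=1}^M \varphi(d(F, p_i)).
\]
For the inequality, I would exploit the symmetry of the point set: because $F$ is a linear subspace, $d(F, -q) = d(F, q)$ for every $q$, and the multiset $\{p_1, \dots, p_M\}$ is exactly $\{\pm n\theta_j\}_{j=1}^N$ with $M = 2N$. Hence
\[
  \frac{1}{M} \sum_{i=1}^M \varphi(d(F, p_i)) = \frac{1}{N} \sum_{j=1}^N \varphi(d(F, n\theta_j)),
\]
and Proposition~\ref{res_points} bounds the right-hand side by $4 n^{-k/2}$. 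Combining gives $\int_F f_0 \leq 4(2\pi n)^{-k/2}$.

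There is essentially no obstacle here; the only point that requires care is bookkeeping the normalization in the Gaussian integral, specifically that the factor $(2\pi)^{-k/2}$ in front arises from the \emph{codimension} of $F$ rather than its dimension, since integrating $\gamma_n$ along the $(n-k)$-dimensional subspace $F$ consumes only $(2\pi)^{(n-k)/2}$ of the full normalization $(2\pi)^{-n/2}$. Everything else is a direct appeal to Proposition~\ref{res_points} together with the built-in evenness of the configuration $\{p_i\}$.
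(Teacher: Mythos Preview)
Your proposal is correct and follows essentially the same route as the paper: orthogonal decomposition of each $p_i$ relative to $F$, the Pythagorean splitting $|x-p_i|^2 = |x-q_i|^2 + d(F,p_i)^2$ to factor the Gaussian, and then an appeal to Proposition~\ref{res_points} for the final bound. You even make explicit the symmetry step $d(F,-q)=d(F,q)$ that reduces the average over $\{p_i\}$ to the average over $\{n\theta_j\}$, which the paper leaves implicit.
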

\begin{proof}
	Let $q_i$ be the orthogonal projection of $p_i$ onto $F$.
	Using the change of variables $y = x - q_i$,
	\begin{align}
		\int_F f_0
		&= \frac 1M \sum_{i=1}^M (2\pi)^{-n/2} \int_F e^{-\frac 12 |x - p_i|^2} d x \\
		&= \frac 1M \sum_{i=1}^M (2\pi)^{-n/2} \int_F e^{-\frac 12 |x - q_i|^2 - \frac 12 d(F, p_i)^2} d x \\
		&= (2\pi)^{-k/2} \frac 1M \sum_{i=1}^M  e^{- \frac 12 d(F, p_i)^2} (2\pi)^{-(n-k)/2} \int_F e^{-\frac 12 |y|^2} d y \\
		&= (2\pi)^{-k/2} \frac 1M \sum_{i=1}^M \varphi( d(F, p_i) ).
	\end{align}
	The inequality follows from Proposition \ref{res_points}.
\end{proof}

\begin{proposition}
	\label{res_prop_volume}
	The convex body $K_0$ in Definition \ref{def_K_f} satisfies
	\[\vol{K_0}^{1/n} \leq c \sqrt{\log(n) k},\]
	where $c>0$ is a universal constant.
\end{proposition}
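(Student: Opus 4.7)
My plan is to apply Urysohn's inequality to bound $\vol{K_0}^{1/n}$ in terms of the mean width of $K_0$, and then estimate that mean width using the fact that $K_0$ has only polynomially many vertices, each of Euclidean norm equal to $n$. The number of (unsigned) vertices is $N+n$ with $N = cn^{k/2+4}$, polynomial in $n$, which is what makes the approach effective.

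First, Urysohn's inequality gives
\[
\vol{K_0}^{1/n} \leq \vol{\B}^{1/n} \int_{S^{n-1}} h_{K_0}(\theta)\, d\sigma(\theta),
\]
where $h_{K_0}$ is the support function and $\sigma$ is the normalized spherical measure. Since $\vol{\B}^{1/n} \leq c/\sqrt{n}$ by Stirling, it suffices to show that the integral on the right is bounded by $c'\sqrt{nk\log n}$.

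The main step is to turn this spherical integral into a Gaussian maximum. By construction $K_0$ is the symmetric convex hull of $\pm p_1, \ldots, \pm p_N, \pm n e_1, \ldots, \pm n e_n$, and every vertex has norm exactly $n$, so $h_{K_0}(\theta) = \max_v |\langle v, \theta\rangle|$, where the max is taken over at most $2(N+n)$ linear functionals. I would represent the uniform measure on $S^{n-1}$ via a standard Gaussian vector $g$ on $\R^n$ by writing $\theta = g/|g|$; since the direction $g/|g|$ is independent of $|g|$ and $h_{K_0}$ is positively homogeneous, one obtains
\[
\int_{S^{n-1}} h_{K_0}(\theta)\, d\sigma(\theta) = \frac{\E[\max_v |\langle v, g\rangle|]}{\E|g|}.
\]
Each $\langle v, g\rangle$ is $N(0,n^2)$, so the classical subgaussian maximum bound gives $\E[\max_v |\langle v, g\rangle|] \leq c n \sqrt{\log(2(N+n))}$, while $\E|g| \geq c'\sqrt n$.

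Finally one plugs in: the polynomial growth of $N$ yields $\log(N+n) \leq c k \log n$ for $k \geq 1$ and $n \geq 2$, and chaining the estimates,
\[
\vol{K_0}^{1/n} \leq \frac{c}{\sqrt n}\cdot \frac{c n \sqrt{k \log n}}{\sqrt n} = c'' \sqrt{k \log n},
\]
as required. The main obstacle I anticipate is purely bookkeeping of universal constants — in particular, verifying that the logarithm really collapses to $O(k \log n)$ even when $k$ is as small as $1$, and that the two $\sqrt n$ factors cancel precisely as shown. The argument is robust because Urysohn is tight on $\B$ and the Gaussian maximum inequality is sharp up to constants for our range of $N$.
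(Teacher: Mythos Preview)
Your proof is correct. The paper's proof is shorter but less self-contained: it simply quotes Gluskin's volume bound \cite[Theorem~2]{gluskin1989extremal}, which says that a polytope inside $\B$ with at most $m$ vertices has volume at most $(c\sqrt{\log(m/n)}/n)^n$, applies it to $\tfrac{1}{n}K_0\subseteq\B$ with $m=M+2n\le c\,n^{k/2+4}$, and reads off the bound. Your route via Urysohn's inequality and the sub-Gaussian maximal inequality is essentially how one proves Gluskin's bound in the first place, so you have unpacked the black box rather than cited it. The trade-off is transparency versus brevity: your argument makes explicit that the factor $\sqrt{k\log n}$ comes from $\sqrt{\log(\text{\#vertices})}$ via the mean width, while the paper delegates that insight to the reference. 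Both rely in the same essential way on the fact that $N$ is only polynomial in $n$ (of degree $O(k)$), which is what the choice of $N$ in Proposition~\ref{res_points} was calibrated to provide.
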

\begin{proof}
	By a result of Gluskin \cite[Theorem 2]{gluskin1989extremal}, a polytope inside $\B$ which has less than $m$ vertices, has volume at most
	\[\left( c \frac {\sqrt{\log(m/n)}}n \right)^n,\]
	where $c>0$ is a universal constant.
	Since $K \subseteq n \B$,
	\begin{align}
		\vol{K}^{1/n} 
		&\leq c \sqrt{\log((M+2n)/n)} \\
		&\leq c \sqrt{\log((c n^{\frac k 2 + 4}+2n)/n)} \\
		&\leq c \sqrt{\log(n)(k/2 + 3)}
	\end{align}
	and the theorem follows with $c>0$ independent of $k$ and $n$, provided $n\geq 2$.
\end{proof}

\begin{proposition}
	\label{res_prop_measure}
	Let $K_0,f_0$ be as in Definition \ref{def_K_f}, then
	\[\int_{3 K_0} f_0  \geq 3/4.\]
\end{proposition}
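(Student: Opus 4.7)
The strategy is to use that each Gaussian bump in $f_0$ is centered at a point $p_i$ which is itself a vertex of $K_0$, so the dilate $3K_0$ will easily swallow a large Euclidean neighborhood of every $p_i$, capturing most of the Gaussian mass. No concentration of measure is required; a one-line second-moment estimate will suffice.

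The key geometric step is the chain of inclusions
\[p_i + 2\sqrt{n}\,\B \;\subseteq\; p_i + 2K_0 \;\subseteq\; 3K_0, \qquad i = 1, \ldots, M.\]
The second inclusion holds because $p_i \in K_0$ and $K_0$ is convex: for any $k \in K_0$ the vector $p_i + 2k = 3\bigl(\tfrac{1}{3} p_i + \tfrac{2}{3} k\bigr)$ is $3$ times a convex combination of points in $K_0$. The first inclusion follows from the fact that $K_0$ contains the cross-polytope $\operatorname{conv}\{\pm n e_1, \ldots, \pm n e_n\} = \{x \in \R^n : \sum_j |x_j| \leq n\}$, which by Cauchy--Schwarz contains $\sqrt{n}\,\B$.

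With these inclusions I would then write
\[\int_{3K_0} f_0 = \frac{1}{M} \sum_{i=1}^M \int_{3K_0} \gamma_n(x - p_i)\, dx \;\geq\; \frac{1}{M} \sum_{i=1}^M \int_{2\sqrt{n}\,\B} \gamma_n(y)\, dy,\]
using the translation $y = x - p_i$. Each integral on the right equals $\P(|X| \leq 2\sqrt{n})$ where $X$ is a standard Gaussian in $\R^n$. Since $\E|X|^2 = n$, Markov's inequality yields $\P(|X|^2 > 4n) \leq 1/4$, so every term in the average is at least $3/4$, and the proposition follows.

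I do not foresee a real obstacle: the only nontrivial observation is the geometric chain above, and everything else reduces to the elementary Markov bound on $|X|^2$, which is already sharp enough to produce the constant $3/4$ demanded by the statement. If the constant had to be pushed closer to $1$ one would need Gaussian concentration around the shell of radius $\sqrt{n}$, but that refinement is not required here.
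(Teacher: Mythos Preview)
Your proposal is correct and matches the paper's own proof essentially line for line: the paper uses the same inclusion $\sqrt{n}\,\B \subseteq K_0$ (from the cross-polytope vertices) to get $K_0 + 2\sqrt{n}\,\B \subseteq 3K_0 \supseteq p_i + 2\sqrt{n}\,\B$, then translates each Gaussian and applies Markov's inequality to $|X|^2$ with $\E|X|^2 = n$ to obtain the constant $3/4$.
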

\begin{proof}
	Since $K_0$ contains the vectors $\pm n e_i$, it contains $\sqrt n \B$.
	Then $3 K_0$ contains $K_0 + 2\sqrt{n}\B$ and
	\begin{align}
		\label{eq_estimate_measure_gamma}
		\int_{3 K_0} f_0  
		&\geq \int_{K_0 + 2 \sqrt n \B} f_0  \\
		&\geq \frac 1M \sum_{i=1}^M \int_{p_i + 2 \sqrt n \B} \gamma_n(x-\delta_{p_i}) dx  \\
		&= \int_{2 \sqrt n \B} \gamma_n(x) dx.
	\end{align}
	To bound the last integral from below we use the fact that $\int_{\R^n} |x|^2 d\gamma_n(x) = n$ and apply the Markov-Chebyshev inequality,
	\begin{align}
		\int_{|x|^2 \geq 4n} \gamma_n(x) dx
		&\leq \frac 1{4n} \int_{\R^n} |x|^2 d\gamma_n(x) = \frac 14,
	\end{align}
	and we obtain from \eqref{eq_estimate_measure_gamma},
	\[\int_{3 K_0} f_0 \geq \int_{2 \sqrt n \B} \gamma_n(x) dx = 1 - \int_{\R^n \setminus 2 \sqrt n \B} \gamma_n(x) dx \geq \frac 34.\]
\end{proof}

\begin{proof}[Proof of Theorem \ref{res_existsK}]
	Consider the set $K = K_0/\vol{K_0}^{1/n}$ and the function $f:\R^n \to \R$ defined by
	\[f(y) = \vol{3K_0} \left(\int_{3K_0} f_0\right)^{-1} f_0(\vol{3 K_0}^{1/n} y)\]
	for $y \in K$, and $f(0)=0$ otherwise.

	Then clearly $\vol{K}=1$ and $\int_K f = 1$, so $f$ is an even probability density in $K$, which is also continuous in $K$.

	By Propositions \ref{res_prop_section}, \ref{res_prop_volume} and \ref{res_prop_measure}, for any $F \in G_{n,n-k}$,
	\begin{align}
		\int_F f 
		&\leq \vol{3K_0} \left(\int_{3K_0} f_0\right)^{-1} \int_F f_0(3\vol{K_0}^{1/n} y) \\
		&= \vol{3K_0}^{k/n} \left(\int_{3K_0} f_0\right)^{-1} \int_F f_0\\
		&\leq 3^k \left( \sqrt{\log(n) k}\right)^k \frac 43 \times 4(2\pi n)^{-k/2} \\
		&\leq \left(c \sqrt{\frac{\log(n) k}n} \right)^k.
	\end{align}
\end{proof}

\bibliographystyle{abbrv}
\bibliography{references}

\end{document}